\newtheorem{theorem}{Theorem}[section]
\newtheorem{lemma}{Lemma}[section]
\theoremstyle{remark}
\newtheorem{definition}{Definition}[section]
\theoremstyle{remark}
\begin{document}

\begin{center}
\Large{ Josephus Nim }\\
\vspace{0.5cm}
\large{Shoei Takahashi,Hikaru Manabe, Aoi Murakami and Ryohei Miyadera}\\
\end{center}
\begin{abstract}
Here, we present a variant of Nim with two piles. In the first pile, we have stones with a weight of $1$, and in the second pile, we have stones with a weight of $-2$.
Two Players take turns to take stones from one of the piles, and the total weight of stones to be removed should be equal to or less than half of the total weight of the stones in the pile.
The player who removed the last stone or stones is the winner of the game.
The authors discovered that when $(n,m)$ is a previous player's winning position, $2m+1$ is the last remaining number of the Josephus problem, where there are $n$ numbers, and every second number is to be removed. There are similar relations between the position of which the Grundy number is $s$ and the $n-s$-th removed number.
\end{abstract}

\section{Introduction}
Let $\mathbb{Z}_{\ge 0}$ and  $\mathbb{N}$ represent the sets of non-negative integers and natural numbers, respectively.

The classic game of Nim is played with stone piles. A player can remove any number of stones from any one pile during their turn; the player who takes the last stone is considered the winner. 
 
 There are many variants of the classical game of Nim. In Maximum Nim, we place an upper bound $f(n)$ on the number of stones that can be removed in terms of the number $n$ of stones in the pile (see \cite{levinenim}). The authors published their research on Maximum Nim in Miyadera et.al \cite{thaij2023b} and 
 Miyadera and Manabe \cite{integer2023}.

 In this study, we investigate a variant of Maximum Nim, with stones of a weight $1$ and a weight $-2$. 
 There are simple relations between Grundy numbers in this game and the Josephus problem. This fact is remarkable since the games of Nim and Josephus' problems are entirely different.
 This game was proposed by S. Takahashi, who is one of the authors of the present article.

\begin{definition}\label{gameoftakahashinew}
Suppose there are two piles of stones, and two players take turns removing stones from one pile. In the first pile, we have stones with a weight of $1$, and in the second pile, we have stones with a weight of $-2$.
When the total weight of stones is $m \in \mathbb{Z}_{\ge 0}$,
a player is allowed to remove stones whose total weight is less than or equal to 
$\lfloor \frac{m}{2}\rfloor$. The player who removed the last stone or stones is the winner of the game.
\end{definition}

\begin{definition}\label{takahashinew}
We denote a position of the game of Definition \ref{gameoftakahashinew} by 
$(x,y)$, where $x$ and $y$ are numbers of Type 1 stones and Type 2 stones, respectively.
\end{definition}

\begin{definition}\label{defofmexgrundy}
$(i)$ For any position $(x,y)$ of this game, there is a set of positions that can be reached by precisely one move, which we denote as \textit{move}$(x,y)$. \\	
$(x,y) = \{(x-t,y):t \leq \lfloor \frac{x-2y}{2}\rfloor\} \cup \{(x,y-t):-2u \leq \lfloor \frac{x-2y}{2}\rfloor\}$.\\
$(ii)$ The \textit{minimum excluded value} $(\textit{mex})$ of a set $S$ of non-negative integers is the smallest non-negative integer that is not in S. \\
$(iii)$ Let $\mathbf{p}$ be the position of an impartial game. The associated \textit{Grundy number} is denoted as $\mathcal{G}(\mathbf{p})$ and is recursively defined as follows:
$\mathcal{G}(\mathbf{p}) = \textit{mex}(\{\mathcal{G}(\mathbf{h}): \mathbf{h} \in move(\mathbf{p})\}).$
\end{definition}

\begin{definition}\label{NPpositions}
$(a)$ A position is referred to as a $\mathcal{P}$-\textit{position} if it is a winning position for the previous player (the player who just moved), as long as he/she plays correctly at every stage.\\
$(b)$ A position is referred to as an $\mathcal{N}$-\textit{position} if it is a winning position for the next player, as long as he/she plays correctly at every stage.
\end{definition}

\begin{definition}\label{grundysposition}
For $s,n \in \mathbb{Z}_{\ge 0}$, let
\begin{align}
\mathcal{G}_{s,n} & = \{((2s+1)\times 2^n-1+m,m):m \in \mathbb{Z}_{\ge 0} \nonumber \\
& \ \ \  \text{ such that } 0 \leq m \leq (2s+1)\times 2^n-1\},  \nonumber
\end{align}
$\mathcal{G}_{0,a}=\mathcal{G}_{0,b}=\emptyset$, and 
for $s \in \mathbb{N}$, let
\begin{equation}
\mathcal{G}_{s,a}=\{(2k,j):k \in \mathbb{Z}_{\ge 0}, 0 \leq k \leq s-1 \text{ and } 2^{s-k-1}+k \leq j \leq 2^{s-k}+k-1\}. \nonumber   
\end{equation}
\begin{equation}
\mathcal{G}_{s,b}=\{(2k+1,j):k \in \mathbb{Z}_{\ge 0}, 0 \leq k \leq s-1 \text{ and } 2^{s-k-1}+k \leq j \leq 2^{s-k}+k-1\}, \nonumber
\end{equation}
and
\begin{equation}
\mathcal{G}_s  = (\bigcup_{n=1}^{\infty} \mathcal{G}_{s,n}) \cup \mathcal{G}_{s,a} \cup \mathcal{G}_{s,b}.\nonumber
\end{equation}
\end{definition}

\begin{lemma}\label{lemmaforthree}
$(i)$ For $s,n,h \in \mathbb{Z}_{\ge 0}$ such that $h \leq s-2$ , we have the following $(\ref{h2jinc})$ and $(\ref{h21jinc})$.
\begin{equation}
\{(2h,j):h+1 \leq j \leq 2^{s-h-1}+h-1\} \subset \cup_{i=h+1}^{s-1}\mathcal{G}_{i,a} \label{h2jinc}
\end{equation}
and 
\begin{equation}
\{(2h+1,j):h+1 \leq j \leq 2^{s-h-1}+h-1\} \subset \cup_{i=h+1}^{s-1}\mathcal{G}_{i,b}. \label{h21jinc}
\end{equation}
$(ii)$ We have the following $(\ref{h2xh})$ and $(\ref{h21xh})$.
\begin{equation}
\{(2h,x):0 \leq x \leq h\} \subset \cup_{i=0}^{h} \cup _{n=0}^{\infty} \mathcal{G}_{i,n} \label{h2xh}
\end{equation}
and 
\begin{equation}
\{(2h+1,x):0 \leq x \leq h\} \subset \cup_{i=0}^{h} \cup _{n=0}^{\infty} \mathcal{G}_{i,n}. \label{h21xh}
\end{equation}
$(iii)$ For any $(x,y) \in \mathcal{G}_{s,a} \cup \mathcal{G}_{s,b}$,
$x \leq 2s-1$ and $y > \frac{x}{2}$.
\end{lemma}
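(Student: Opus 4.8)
The plan is to prove the three parts independently; each reduces to an elementary statement about the index sets defining $\mathcal{G}_{i,n}$, $\mathcal{G}_{i,a}$, $\mathcal{G}_{i,b}$, with no appeal to the game itself. (In $(i)$ the variable $n$ does not occur in the conclusion and may be ignored.)

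For part $(iii)$ I would simply unwind the definition. If $(x,y)\in\mathcal{G}_{s,a}$, then $x=2k$ and $y=j$ with $0\le k\le s-1$ and $2^{s-k-1}+k\le j\le 2^{s-k}+k-1$; thus $x=2k\le 2s-2\le 2s-1$, and since $s-k-1\ge 0$ we get $2^{s-k-1}\ge 1$, hence $y\ge k+1>k=\tfrac{x}{2}$. If $(x,y)\in\mathcal{G}_{s,b}$ the argument is the same with $x=2k+1$, using $y\ge k+1>k+\tfrac12=\tfrac{x}{2}$.

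For part $(ii)$ the key observation is that, because the first coordinate is even and a point of $\mathcal{G}_{i,n}$ has the form $((2i+1)2^n-1+m,m)$, the relation $(2h,x)\in\mathcal{G}_{i,n}$ is equivalent to $(2i+1)2^n=2h+1-x$ together with $0\le x\le(2i+1)2^n-1$. Given $0\le x\le h$, I would put $N=2h+1-x$; then $1\le N\le 2h+1$, and writing $N=(2i+1)2^n$ with $2^n$ the $2$-part of $N$ (so $i,n\ge 0$) one gets $2i+1\le N\le 2h+1$, whence $i\le h$, while $x\le 2h-x=N-1$ holds because $x\le h$. Thus $(2h,x)\in\mathcal{G}_{i,n}\subseteq\bigcup_{i=0}^{h}\bigcup_{n=0}^{\infty}\mathcal{G}_{i,n}$, which is $(\ref{h2xh})$. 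For $(\ref{h21xh})$ the same works with $N=2h+2-x$: now $2\le N\le 2h+2$, the membership condition becomes $(2i+1)2^n=2h+2-x$ with $0\le x\le 2h+1-x$, and $2i+1\le 2h+2$ again gives $i\le h$.

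For part $(i)$, in $(\ref{h2jinc})$ the first coordinate $2h$ forces the internal parameter of any $\mathcal{G}_{i,a}$ containing $(2h,j)$ to be $k=h$ (which also forces $i\ge h+1$ from the constraint $k\le i-1$), so $(2h,j)\in\mathcal{G}_{i,a}$ with $h+1\le i\le s-1$ is equivalent to $2^{(i-h)-1}+h\le j\le 2^{i-h}+h-1$. Substituting $\ell=i-h$ and $m=j-h$, the claim becomes: the intervals $[2^{\ell-1},2^{\ell}-1]$, $\ell=1,\dots,s-h-1$, cover $\{1,2,\dots,2^{s-h-1}-1\}$. Given $j$ with $h+1\le j\le 2^{s-h-1}+h-1$, i.e. $1\le m\le 2^{s-h-1}-1$, I would take $\ell=\lfloor\log_2 m\rfloor+1$; then $2^{\ell-1}\le m<2^{\ell}$ and $1\le\ell\le s-h-1$ (the upper bound using $m<2^{s-h-1}$), so $i=h+\ell$ works. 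Statement $(\ref{h21jinc})$ is handled identically with $2h$ replaced by $2h+1$ and $\mathcal{G}_{i,a}$ by $\mathcal{G}_{i,b}$, the internal parameter now being $k=h$ via $2h+1=2k+1$.

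The only delicate point — the one I would flag as the main obstacle — is the index translation in $(i)$: one must keep separate the subscript $i$ of $\mathcal{G}_{i,a}$, the internal parameter $k$ (pinned to $h$ by the first coordinate, which in turn enforces $i\ge h+1$), and the exponent $i-h$ controlling the admissible window for $j$, and then check that the endpoints of the $j$-range and the boundary case $h=s-2$ fall inside the required bounds. Everything else is routine bookkeeping.
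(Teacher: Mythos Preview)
Your proof is correct and follows essentially the same approach as the paper's: in $(i)$ both you and the paper pin the internal parameter to $k=h$ and partition the $j$-range into the dyadic blocks $[2^{i-h-1}+h,\,2^{i-h}+h-1]$ (you via $\ell=\lfloor\log_2(j-h)\rfloor+1$, the paper by simply writing the union); in $(ii)$ both factor $2h+1-x$ (resp.\ $2h+2-x$) as $(2i+1)2^n$; and $(iii)$ is the same direct computation. If anything, you are more careful than the paper in $(ii)$, where you explicitly verify the membership constraint $x\le(2i+1)2^n-1$ that the paper leaves implicit.
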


\begin{proof}
$\mathrm{(i)}$ First, we prove (\ref{h2jinc}). Let $s,n,h \in \mathbb{Z}_{\ge 0}$ such that $h \leq s-2$.
\begin{align}
& \{(2h,j):h+1 \leq j \leq 2^{s-h-1}+h-1\} \nonumber \\
& \ \ = \cup_{i=h+1}^{s-1} \{(2h,j):2^{i-h-1}+h \leq j \leq 2^{i-h}+h-1\}    \nonumber \\
& \ \ = \cup_{i=h+1}^{s-1} \{(2h,j):h \leq i-1, 2^{i-h-1}+h \leq j \leq 2^{i-h}+h-1\}    \nonumber \\
& \ \ \subset \cup_{i=h+1}^{s-1}\mathcal{G}_{i,a}.\nonumber
\end{align}
Similarly, we have (\ref{h21jinc}). \\
$\mathrm{(ii)}$
For $2h+1-x \in \mathbb{N}$, there exist 
$n,t \in \mathbb{Z}_{\ge 0}$ such that $t \leq h$ and 
$2h+1-x=(2t+1)2^n$. Then,
$(2h,x)=((2t+1)2^n-1+x,x) \in \mathcal{G}_{t,n}$. Therefore, we have (\ref{h2xh}). Similarly, we prove (\ref{h21xh}).\\
$\mathrm{(iii)}$ For  $(x,y) = (2k,j)$ or $(2k+1,j)$ such that $2^{s-k-1}+k \leq j$, we have 
$y = j \geq k+1 > \frac{x}{2}$. Since $k \leq s-1$, $x \leq 2s-1$.
\end{proof}

\begin{lemma}\label{frompnotp}
Suppose that we start with a position 
\begin{equation}
(x,y) \in \mathcal{G}_s. \nonumber
\end{equation}
Then,
\begin{equation}
move(x,y) \cap \mathcal{G}_s = \emptyset. \label{nointer}
\end{equation}
\end{lemma}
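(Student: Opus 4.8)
The plan is a case analysis according to which of the three kinds of sets in Definition~\ref{grundysposition} contains $(x,y)$, checking in each case that no position of $move(x,y)$ lies in $\mathcal{G}_s$. Two reductions organize everything. Recall a Type~1 move sends $(x,y)\mapsto(x-t,y)$ with $1\le t\le\lfloor\frac{x-2y}{2}\rfloor$, and a Type~2 move sends $(x,y)\mapsto(x,y-u)$ with $1\le u\le y$ and $-2u\le\lfloor\frac{x-2y}{2}\rfloor$. \emph{First reduction:} from a position with $x-2y\ge0$ every move leads again to a position with nonnegative weight, since a Type~1 move leaves weight $x-2y-t\ge\lceil\frac{x-2y}{2}\rceil\ge0$ and a Type~2 move only increases the weight; but by Lemma~\ref{lemmaforthree}(iii) every element of $\mathcal{G}_{s,a}\cup\mathcal{G}_{s,b}$ has $y>\frac x2$, i.e.\ negative weight, so from a nonnegative-weight position no move reaches $\mathcal{G}_{s,a}\cup\mathcal{G}_{s,b}$. \emph{Second reduction:} in the region $\{x-2y\ge0,\ y\ge0\}$ a position lies in $\mathcal{G}_{s,n}$ iff $x-y=(2s+1)2^n-1$; since a Type~1 move strictly decreases $x-y$ by $t\in[1,\lfloor\frac{x-2y}2\rfloor]$ and a Type~2 move strictly increases $x-y$ by $u\in[1,y]$ (here $-2u\le\lfloor\frac{x-2y}2\rfloor$ is automatic), to exclude the sets $\mathcal{G}_{s,n'}$ it is enough to show the new value of $x-y$ is never of the form $(2s+1)2^{n'}-1$.

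Case $(x,y)\in\mathcal{G}_{s,n}$, $n\ge1$. Here $x-y=(2s+1)2^n-1$ and $x-2y\ge0$, so by the first reduction we only have to rule out the sets $\mathcal{G}_{s,n'}$. For a Type~1 move, the new value of $x-y$ lies in $[(2s+1)2^n-1-\lfloor\frac{x-2y}2\rfloor,\,(2s+1)2^n-2]$, and substituting $x-2y=(2s+1)2^n-1-y$ with $y\ge0$ into the floor shows the left endpoint is at least $(2s+1)2^{n-1}$, so the new value lies in $[(2s+1)2^{n-1},\,(2s+1)2^n-2]$. For a Type~2 move, the new value of $x-y$ is $(2s+1)2^n-1+u$ with $1\le u\le y\le(2s+1)2^n-1$, hence lies in $[(2s+1)2^n,\,(2s+1)2^{n+1}-2]$. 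Neither interval contains an integer of the form $(2s+1)2^{n'}-1$: consecutive such integers differ by a factor $2$, and each is one less than the corresponding multiple of $2s+1$, so they straddle both intervals (e.g.\ $(2s+1)2^{n-1}-1<(2s+1)2^{n-1}$ and $(2s+1)2^n-1>(2s+1)2^n-2$). This yields $move(x,y)\cap\mathcal{G}_s=\emptyset$.

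Case $(x,y)\in\mathcal{G}_{s,a}$: write $x=2k$ with $0\le k\le s-1$ and $2^{s-k-1}+k\le y\le 2^{s-k}+k-1$. Then $x-2y=2(k-y)<0$ (as $y>k$), so $\lfloor\frac{x-2y}2\rfloor=k-y<0$ and there is no Type~1 move; a Type~2 move goes to $(2k,y-u)$ where $-2u\le k-y$ forces $u\ge\lceil\frac{y-k}2\rceil$, so the second coordinate of the image is at most $y-\lceil\frac{y-k}2\rceil=\lfloor\frac{y+k}2\rfloor\le 2^{s-k-1}+k-1$. But the only members of $\mathcal{G}_s$ whose first coordinate equals $2k$ are the $(2k,j)\in\mathcal{G}_{s,a}$ with $j\ge 2^{s-k-1}+k$ (each $\mathcal{G}_{s,n'}$ has first coordinate $\ge(2s+1)\cdot2-1=4s+1>2k$, and $\mathcal{G}_{s,b}$ consists of odd first coordinates), so no image of a Type~2 move lies in $\mathcal{G}_s$ and $move(x,y)\cap\mathcal{G}_s=\emptyset$. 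The case $(x,y)\in\mathcal{G}_{s,b}$ is identical, with $x=2k+1$, $\lfloor\frac{x-2y}2\rfloor=k-y$, and $\mathcal{G}_{s,b}$ in place of $\mathcal{G}_{s,a}$.

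The proof is elementary throughout; the only thing requiring care is the bookkeeping of the floor and ceiling bounds, and the one genuine point is checking that $[(2s+1)2^{n-1},(2s+1)2^n-2]$ and $[(2s+1)2^n,(2s+1)2^{n+1}-2]$ each miss every number $(2s+1)2^{n'}-1$ — which works exactly because such numbers are spaced by ratio $2$ and each sits one below a multiple of $2s+1$. It is also worth keeping in mind that $\mathcal{G}_s$ collects only the sets $\mathcal{G}_{s,n}$ with $n\ge1$, so the exponent $n'=0$ never needs separate treatment, and that the degenerate instances (for example $s=0$, where $\mathcal{G}_{s,a}=\mathcal{G}_{s,b}=\emptyset$ and only the first case arises, or parameter values forcing an empty move set) are handled vacuously.
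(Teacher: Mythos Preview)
Your proof is correct and follows essentially the same case analysis as the paper's: split according to whether $(x,y)$ lies in some $\mathcal{G}_{s,n}$ or in $\mathcal{G}_{s,a}\cup\mathcal{G}_{s,b}$, treat Type~1 and Type~2 moves separately, and use Lemma~\ref{lemmaforthree}(iii) to rule out landing in $\mathcal{G}_{s,a}\cup\mathcal{G}_{s,b}$ from a nonnegative-weight position. Your organization is a bit tighter---tracking the single quantity $x-y$ and packaging the weight-sign argument as a reusable ``first reduction'' rather than repeating it in each subcase---but the underlying ideas are the same; one cosmetic discrepancy is that the paper's proof also handles the starting case $(x,y)\in\mathcal{G}_{s,0}$, which you omit because Definition~\ref{grundysposition} (as written) takes the union only over $n\ge 1$.
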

\begin{proof} Suppose that we start with a position  $(x,y) \in \mathcal{G}_s$.\\
$\mathrm{(i)}$    
Suppose that $(x,y) = ((2s+1) \times 2^n-1+m,m) \in \mathcal{G}_{s,n}$, where
\begin{equation}
m \leq (2s+1) \times 2^n-1. \label{mleq2s1}    
\end{equation}
$\mathrm{(i.1)}$  Suppose that $n=0$. Then, by (\ref{mleq2s1}), 
\begin{equation}
m \leq 2s \label{mleq2s}    
\end{equation}
and $(x,y)=(2s+m,m)$, and by (\ref{mleq2s1}),
the total weight of stones is $2s+m-2m=2s-m \geq 0$.
By Definition \ref{gameoftakahashinew}, 
the total weight of the stones that can be removed is 
\begin{equation}
0 \leq \lfloor \frac{2s-m}{2} \rfloor = s - \lceil \frac{m}{2} \rceil.\label{rangeofi2b}
\end{equation}
We prove (\ref{nointer}) by contraction.\\
$\mathrm{(i.1.1)}$ 
We assume that we remove stones from the first pile, and move to the position
\begin{equation}
(u,m) =(2s+m-i,m)= ((2s+1)\times 2^k-1+m,m) \in \mathcal{G}_{s,k},\label{umisgsk}
\end{equation}
where $k \in \mathbb{Z}_{\ge 0}$ and 
\begin{equation}
0 \leq i \leq  s - \lceil \frac{m}{2}\rceil. \nonumber
\end{equation}
Then, $i = 2s-(2s+1)\times 2^k +1 \leq 0$ that 
 contradicts (\ref{umisgsk}).\\
$\mathrm{(i.1.2)}$ 
We assume that we remove stones from the second pile, and move to the position 
\begin{equation}
(u,m) =(2s+m,m-i)= ((2s+1)\times 2^k-1+m-i,m-i) \in \mathcal{G}_{s,k},\label{umisgsk22}
\end{equation}
where
\begin{equation}
i \leq m. \label{therangei22}
\end{equation}
By (\ref{umisgsk22}) and $k \geq 1$, 
$i \geq 2(2s+1)-2s-1 = 2s+1$ that contradicts (\ref{therangei22}) and (\ref{mleq2s}).\\
$\mathrm{(i.1.3)}$ 
We assume that we remove stones from the first pile, and move to 
\begin{equation}
(u,m) =(2s+m-i,m) \in \mathcal{G}_{a}\cup \mathcal{G}_{a}.\nonumber
\end{equation}
By (\ref{mleq2s}) and (\ref{rangeofi2b}),
\begin{equation}
u = 2s+m-i \geq 2s+m-(s - \lceil \frac{m}{2}\rceil) = s+m+\lceil \frac{m}{2}\rceil \geq 2m.\nonumber
\end{equation}
This contradicts $(iii)$ of Lemma \ref{lemmaforthree}.\\
$\mathrm{(i.1.4)}$ 
We assume that we remove stones from the second pile, and move to 
\begin{equation}
(u,m) =(2s+m,m-i) \in \mathcal{G}_{s,a}\cup \mathcal{G}_{s,a}.\nonumber
\end{equation}
Then, by (\ref{mleq2s}), 
\begin{equation}
u = 2s+m \geq  2m > 2(m-i).\nonumber
\end{equation}
This contradicts $(iii)$ of Lemma \ref{lemmaforthree}.\\
$\mathrm{(i.2)}$  Suppose that $n \geq 1$.
Then, the total weight of the stones is
\begin{equation}
(2s+1) \times2^n-1+m-2m=(2s+1) \times2^n-1-m, \nonumber
\end{equation}
and hence by Definition \ref{gameoftakahashinew}, the total weight of the stones that can be removed is 
\begin{equation}
0 \leq \lfloor \frac{(2s+1) \times 2^n-1-m}{2} \rfloor \leq (2s+1) \times 2^{n-1}-1.\label{howmuchremove}
\end{equation}
$\mathrm{(i.2.1)}$ Suppose we remove stones from the first pile, i.e., reduce the first coordinate $x$.
Then, by (\ref{howmuchremove}), we move to the position 
\begin{equation}
(u,m)=((2s+1) \times 2^n-1+m-i,m)\label{movetoapositon}
\end{equation}
for $i \in \mathbb{N}$ such that 
\begin{equation}
i \leq (2s+1) \times 2^{n-1}-1. \label{rangeofi}
\end{equation}
We prove (\ref{nointer}) by contradiction.\\
$\mathrm{(i.2.1.1)}$ We assume that 
\begin{equation}
((2s+1) \times 2^n-1+m-i,m) \in \cup_{k=0}^{\infty} \mathcal{G}_{s,k}.\nonumber
\end{equation}
Then, there exists $n^{\prime} \in \mathbb{Z}_{\ge 0}$ such that 
$n^{\prime} < n$ and 
\begin{equation}
((2s+1) \times 2^n-1+m-i,m) = ((2s+1) \times 2^{n^{\prime}}-1+m,m).\nonumber
\end{equation}
Then,
\begin{equation}
i \geq (2s+1)\times (2^n-2^{n^{\prime}} \geq (2s+1)\times 2^{n-1}.\nonumber
\end{equation}
that contradicts  (\ref{rangeofi}). Therefore, we have (\ref{nointer}).\\
$\mathrm{(i.2.1.2)}$ We assume that 
\begin{equation}
((2s+1) \times 2^n-1+m-i,m) \in \mathcal{G}_{s,a} \cup  \mathcal{G}_{s,b},\label{2s12n}
\end{equation}
where
\begin{equation}
i \leq (2s+1) \times 2^{n-1}-1. \label{2s12n2}
\end{equation}
Then, by (\ref{movetoapositon}), (\ref{2s12n}) and (\ref{2s12n2}),
\begin{equation}
u =(2s+1) \times 2^n-1+m-i \geq (2s+1) \times 2^{n-1}+m \nonumber
\end{equation}
Since $n \geq 1$, $(2s+1) \times 2^n-1+m-i \geq 2s$. Therefore,
this contradicts $(iii)$ of Lemma \ref{lemmaforthree}.\\
$\mathrm{(i.2.2)}$ Suppose we remove stones from the second pile, i.e., reduce the second coordinate $y$.
Since each stone in the second pile has a weight of $-2$, 
by (\ref{howmuchremove}), we can remove $i$ stones with
\begin{equation}
i \leq m \leq(2s+1) \times 2^n-1. \label{rangeofi2}
\end{equation}
Suppose that we move to the position
\begin{equation}
((2s+1)\times 2^{n}-1+m, m-i) = ((2s+1) \times 2^{n^{\prime}}-1+(m-i),m-i). \nonumber
\end{equation}
Then, $n^{\prime} \geq n+1$ and 
\begin{equation}
i = (2s+1)(2^{n^{\prime}}-2^n) \geq (2s+1)2^n,   \nonumber
\end{equation}
and this contradicts  (\ref{rangeofi2}). Therefore, we have (\ref{nointer}).\\
$\mathrm{(ii)}$ Suppose that $(x,y) = (2k,j)$ or $(2k+1,j) \in \mathcal{G}_{s,a}\cup \mathcal{G}_{s,a}$ such that 
$0 \leq k \leq s-1 $ and 
\begin{equation}
2^{s-k-1}+k \leq j \leq 2^{s-k}+k-1.\label{rangeofiwithk}
\end{equation}
$\mathrm{(ii.1)}$ We prove that 
\begin{equation}
move(x,y) \cap \mathcal{G}_{s,n} = \emptyset.\label{movesyempty2}  
\end{equation}
for any $n \in \mathbb{Z}_{\ge 0}$. By Lemma \ref{lemmaforthree},  $x \leq 2s-1$ and $u \geq 2s$ for any $(u,v) \in \mathcal{G}_{s,n}$, we have (\ref{movesyempty2}).\\
$\mathrm{(ii.2)}$.
The total weight of stones is 
$ 2k-2j$ or $ 2k+1-2j$,
and hence the total weight of stones that can be removed is
\begin{equation}
\lfloor \frac{2k-2j}{2} \rfloor =\lfloor \frac{2k+1-2j}{2} \rfloor=k-j. \label{totalremove}  
\end{equation}
By (\ref{rangeofiwithk}), 
\begin{equation}
2k-2j < 2k+1-2j \leq 1-2^{s-k} \leq 0,\nonumber
\end{equation}
we cannot remove stones from the first pile.

Next, we remove stones from the second pile, and move to the position
$(2k,i)$ or $(2k+1,i)$. We prove that $(2k,i), (2k+1,i) \notin  \mathcal{G}_{s,a} \cup \mathcal{G}_{s,b}$ by contradiction.
We suppose that 
\begin{equation}
2^{s-k-1}+k \leq i < j \leq 2^{s-k}+k-1.\label{rangeofij}
\end{equation}
Then, we remove $j-i$ stones from the second pile, and the total weight of 
stones that are removed is
\begin{equation}
-2(j-i).\label{totalji}
\end{equation}
By (\ref{totalremove}) and (\ref{totalji}) we have
\begin{equation}
k-j \geq -2j+2i, \nonumber
\end{equation}
and hence by (\ref{rangeofij})
\begin{equation}
k+j \geq 2i \geq 2^{s-k}+2k.  \nonumber
\end{equation}
Then we have 
\begin{equation}
j \geq 2^{s-k}+k \ \nonumber
\end{equation}
that contradicts (\ref{rangeofij}).
Therefore, we have (\ref{nointer}).
\end{proof}

\begin{lemma}\label{fromnotptop1}
We suppose that we start with a position 
\begin{equation}
(x,y) \notin \mathcal{G}_s \nonumber
\end{equation}
such that  $x \geq 2s$.
Then,
\begin{equation}
move(x,y) \cap \mathcal{G}_s \ne \emptyset. \nonumber
\end{equation}
\end{lemma}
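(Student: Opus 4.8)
The plan is to give a constructive proof: starting from $(x,y)\notin\mathcal{G}_s$ with $x\ge 2s$, I will exhibit one explicit element of $\mathcal{G}_s$ lying in $move(x,y)$, together with the move reaching it. Since $x\ge 2s$ we have $(2s+1)2^{0}=2s+1\le x+1$, so there is a largest integer $N\in\mathbb{Z}_{\ge 0}$ with $(2s+1)2^{N}\le x+1$; by maximality $(2s+1)2^{N+1}\ge x+2$. I set $m^{*}=x+1-(2s+1)2^{N}$. The whole argument then rests on three one-line inequalities recorded at the outset: (1) $m^{*}\ge 0$, immediate from $(2s+1)2^{N}\le x+1$; (2) $m^{*}\le\lfloor x/2\rfloor$, since $2m^{*}=2x+2-(2s+1)2^{N+1}\le 2x+2-(x+2)=x$ and $m^{*}$ is an integer; and (3) $m^{*}\le(2s+1)2^{N}-1$, which is just the inequality $x+2\le(2s+1)2^{N+1}$. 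By (1) and (3), $(x,m^{*})=\bigl((2s+1)2^{N}-1+m^{*},\,m^{*}\bigr)\in\mathcal{G}_{s,N}\subseteq\mathcal{G}_s$.

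Next I would invoke the hypothesis: if $m^{*}=y$ then $(x,y)=(x,m^{*})\in\mathcal{G}_s$, a contradiction, so $m^{*}\ne y$, and I split into the two remaining cases. If $m^{*}<y$, the move is to remove $y-m^{*}\ge 1$ stones from the second pile, reaching $(x,m^{*})$; legality amounts to $-2(y-m^{*})\le\lfloor(x-2y)/2\rfloor=\lfloor x/2\rfloor-y$, i.e.\ $2m^{*}\le\lfloor x/2\rfloor+y$, which follows from (2) together with $m^{*}\le y-1$. If $m^{*}>y$, then from $y<m^{*}$ and (2) we get $2y<x$, so the first pile admits moves; I remove $m^{*}-y\ge 1$ stones from it, reaching $\bigl(x-(m^{*}-y),\,y\bigr)=\bigl((2s+1)2^{N}-1+y,\,y\bigr)$, which is legal because $m^{*}-y\le\lfloor x/2\rfloor-y=\lfloor(x-2y)/2\rfloor$ by (2), and this target lies in $\mathcal{G}_{s,N}\subseteq\mathcal{G}_s$ since $y<m^{*}\le(2s+1)2^{N}-1$ by (3). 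In either case $move(x,y)\cap\mathcal{G}_s\ne\emptyset$.

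I do not expect a genuine obstacle, only bookkeeping: one must be careful that $N$ is chosen as the \emph{largest} admissible exponent (so that $(2s+1)2^{N+1}\ge x+2$, and hence (2) and (3), hold), that strict versus weak inequalities line up so that both $m^{*}\le(2s+1)2^{N}-1$ and, in the second case, $y\le(2s+1)2^{N}-1$ come out correctly, and that the degenerate value $m^{*}=y$ is ruled out precisely by the hypothesis $(x,y)\notin\mathcal{G}_s$. That hypothesis is the only place it is used, and it must be, since by Lemma \ref{frompnotp} the conclusion fails when $(x,y)\in\mathcal{G}_s$. It is worth noting that the move produced always lands in some $\mathcal{G}_{s,N}$ and never reduces the first coordinate below $2s$, so neither $\mathcal{G}_{s,a}\cup\mathcal{G}_{s,b}$ nor part (iii) of Lemma \ref{lemmaforthree} enters here; the complementary range $x<2s$ is presumably handled in a separate lemma.
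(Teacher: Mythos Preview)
Your proof is correct and follows essentially the same approach as the paper: choose the unique $N$ with $(2s+1)2^{N}\le x+1<(2s+1)2^{N+1}$, set $m^{*}=x+1-(2s+1)2^{N}$, rule out $y=m^{*}$ via the hypothesis, and then move in the second pile when $y>m^{*}$ and in the first pile when $y<m^{*}$, landing in $\mathcal{G}_{s,N}$ in both cases. Your treatment of the floor in the legality checks is in fact a bit cleaner than the paper's, which works with $\frac{x-2y}{2}$ rather than $\lfloor\frac{x-2y}{2}\rfloor$.
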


\begin{proof}
Suppose that we start with a position 
\begin{equation}
(x,y) \notin \mathcal{G}_s \nonumber
\end{equation}
such that  $x \geq 2s$.
Then,
there exists $n \in \mathbb{Z}_{\ge 0}$ such that 
\begin{equation}
(2s+1)\times 2^n \leq x+1 <(2s+1)\times 2^{n+1}. \label{rangeofx}
\end{equation}
Let 
\begin{equation}
m = x-((2s+1)\times 2^n-1).  \label{meqx}  
\end{equation}
 Then, by (\ref{rangeofx})
\begin{align}
0 & \leq m \nonumber \\
& =  x-((2s+1)\times 2^{n-1}-1)   \nonumber \\
& \leq (2s+1)\times 2^{n+1} -2 -((2s+1)\times 2^n-1)  \nonumber  \\
& =(2s+1)\times 2^n-1.  \label{mranze}
\end{align}
Therefore, by (\ref{meqx}) and (\ref{mranze})
\begin{equation}
x \geq 2m.  \label{xbigm}
\end{equation}
The total weight of stones is $x-2y$, and we can remove stones whose total weight is
\begin{equation}
\frac{x-2y}{2}. \label{removeweight2}   
\end{equation}
If $y=m$,
$(x,y)=((2s+1) \times 2^n-1+m,m) \in \mathcal{G}_s$. 
Therefore, we have two cases $(i)$ and $(ii)$.\\
$\mathrm{(i)}$ Suppose that 
\begin{equation}
y > m.  \label{ygreatm}  
\end{equation}
By (\ref{xbigm}) and  (\ref{ygreatm}) we have
\begin{equation}
x+2y > 4m,\nonumber
\end{equation}
and hence
\begin{equation}
\frac{x-2y}{2} > -2(y-m) \nonumber
\end{equation}
Therefore, by (\ref{removeweight2}), we can remove $y-m$ stones from the second pile to move to
$((2s+1)\times 2^n-1+m,m)  \in \mathcal{S}$.\\
$\mathrm{(ii)}$ Suppose that 
\begin{equation}
y < m.  \nonumber
\end{equation}
By (\ref{xbigm}), 
\begin{align}
 & \frac{x-2y}{2}-(m-y) \nonumber \\
& = \frac{x-2y-2(m-y)}{2}  \nonumber \\
& = \frac{x-2m}{2}  \nonumber \\
& \geq 0.\label{2ngreatmy2}
\end{align}
By (\ref{2ngreatmy2}), 
\begin{equation}
\frac{x-2y}{2} \geq m-y,  \nonumber
\end{equation}
and hence 
we can remove $m-y$ stones from the first pile to move to
the position $(x,m)=((2s+1)\times 2^n-1+m,m) \in  \mathcal{G}_s$.\\
\end{proof}

\begin{lemma}\label{fromnotptop2}
$(i)$ If 
\begin{equation}
(x,y) \notin \cup_{i=0}^s \mathcal{G}_i   \nonumber
\end{equation}
and $x \leq 2s-1$,
then 
\begin{equation}
move(x,y) \cap \mathcal{G}_s \ne \emptyset.   \nonumber
\end{equation}
\end{lemma}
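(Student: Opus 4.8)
The plan is to reach $\mathcal{G}_s$ by a single move in the \emph{second} pile, keeping the first coordinate fixed. Write $x=2h$ or $x=2h+1$; from $x\le 2s-1$ we get $0\le h\le s-1$ (and we may assume $s\ge 1$, since otherwise the hypothesis is vacuous). First observe that the target cannot and need not lie in $\bigcup_{n}\mathcal{G}_{s,n}$: every element of $\mathcal{G}_{s,n}$ has first coordinate at least $(2s+1)2^n-1\ge 2s>x$, whereas a legal move never increases the first coordinate. So it suffices to land in $\mathcal{G}_{s,a}\cup\mathcal{G}_{s,b}$.

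The decisive step is to bound $y$ from below. I treat the even case $x=2h$; the odd case is word-for-word the same with $\mathcal{G}_{s,b}$, $(\ref{h21jinc})$, $(\ref{h21xh})$ replacing $\mathcal{G}_{s,a}$, $(\ref{h2jinc})$, $(\ref{h2xh})$. The claim is that
\begin{equation}
\{(2h,Y):0\le Y\le 2^{s-h}+h-1\}\subset \bigcup_{i=0}^{s}\mathcal{G}_i. \nonumber
\end{equation}
This follows by splitting the range of $Y$ into three consecutive blocks: $\{(2h,Y):0\le Y\le h\}\subset\bigcup_{i=0}^{h}\bigcup_{n}\mathcal{G}_{i,n}$ by $(\ref{h2xh})$; for $h\le s-2$, $\{(2h,Y):h+1\le Y\le 2^{s-h-1}+h-1\}\subset\bigcup_{i=h+1}^{s-1}\mathcal{G}_{i,a}$ by $(\ref{h2jinc})$ (this middle block is empty precisely when $h=s-1$, the case where $(\ref{h2jinc})$ is not available but also not needed); and $\{(2h,Y):2^{s-h-1}+h\le Y\le 2^{s-h}+h-1\}\subset\mathcal{G}_{s,a}$ directly from Definition \ref{grundysposition} with $k=h$. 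Since $(x,y)\notin\bigcup_{i=0}^{s}\mathcal{G}_i$, this forces $y\ge 2^{s-h}+h$.

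Finally, set $j_0=2^{s-h-1}+h$, the left endpoint of the admissible range, so that $(2h,j_0)\in\mathcal{G}_{s,a}\subset\mathcal{G}_s$. Then $0\le j_0<y$ (because $y\ge 2^{s-h}+h>2^{s-h-1}+h=j_0$), so removing $y-j_0\ge 1$ stones from the second pile is a genuine move in the right direction; it is legal because the total weight that may be removed is $\lfloor\frac{x-2y}{2}\rfloor=\lfloor\frac{2h-2y}{2}\rfloor=h-y$, and $-2(y-j_0)\le h-y$ reduces to $2j_0\le h+y$, which holds since $h+y\ge h+(2^{s-h}+h)=2(2^{s-h-1}+h)=2j_0$. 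Hence $(2h,j_0)\in move(x,y)\cap\mathcal{G}_s$, as required; for $x=2h+1$ the same $j_0$ works, now with $(2h+1,j_0)\in\mathcal{G}_{s,b}$ and $\lfloor\frac{2h+1-2y}{2}\rfloor=h-y$.

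The only point requiring care is the middle paragraph: getting the three covering intervals to abut correctly, and noticing that the boundary case $h=s-1$ makes the middle interval vacuous so that $(\ref{h2jinc})$ — which is only stated for $h\le s-2$ — is never invoked where it does not apply. Everything else is a two-line inequality check.
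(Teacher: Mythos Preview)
Your proof is correct and follows essentially the same route as the paper: bound $y$ from below using Lemma~\ref{lemmaforthree} together with Definition~\ref{grundysposition}, then remove stones from the second pile to reach $(x,2^{s-h-1}+h)\in\mathcal{G}_{s,a}\cup\mathcal{G}_{s,b}$. The only organizational difference is that the paper splits off the boundary case $h=s-1$ (their case~(i)) and treats it separately, whereas you handle all $0\le h\le s-1$ uniformly by observing that the middle interval from $(\ref{h2jinc})$ is vacuous precisely when $h=s-1$; your target $j_0=2^{s-h-1}+h$ specializes to $s$ in that case, matching the paper's choice.
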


\begin{proof}
Let 
\begin{equation}
(x,y) \notin \cup_{i=0}^s \mathcal{G}_i \label{notingi}
\end{equation}
and $x \leq 2s-1$.
Then there exists $k$ such that
$0 \leq k \leq s-1$ and $x = 2k$ or $x=2k+1$.\\
$\mathrm{(i)}$ First we suppose that $x=2s-1$ or $2s-2$.
By Definition \ref{grundysposition}
\begin{equation}
(2s-1,s), (2s-2,s) \in \mathcal{G}_{s,a} \cup \mathcal{G}_{s,b}.\label{2s12s1}
\end{equation}
By $(ii)$ of Lemma \ref{lemmaforthree},
\begin{equation}
\{(2s-1,j):0 \leq j \leq s-1\} \subset \cup_{i=0}^{s-1} \cup _{n=0}^{\infty} \mathcal{G}_{i,n} \label{2sminas1}
\end{equation}
and 
\begin{equation}
\{(2s-2,j):0 \leq j \leq s-1\} \subset \cup_{i=0}^{s-1} \cup _{n=0}^{\infty} \mathcal{G}_{i,n}. \label{2sminas2}
\end{equation}
By (\ref{notingi}), (\ref{2s12s1}), (\ref{2sminas1}) and (\ref{2sminas2}), we assume that 
\begin{equation}
y \geq s+1.\label{ygeqs1}
\end{equation}
At the position $(2s-1,y)$ or $(2s-2,y)$, the total weight of stones is $2s-1-2y$ or $2s-2-2y$, and
we can remove the total weight of $s-y-1$ stones.
By (\ref{ygeqs1}), 
\begin{equation}
s-y-1 \geq -2y+2s = -2(y-s).
\end{equation}
Therefore, we remove $y-s$ stones from the second pile to
move to $(2s-1,s)$ or $(2s-2,s) \in \mathcal{G}_{s,a} \cup \mathcal{G}_{s,b}$.\\
$\mathrm{(ii)}$ Next, we suppose that $x=2k$ or $x=2k+1$ with $k \leq s-2$.
By $(i)$ and $(ii)$ of Lemma \ref{lemmaforthree}, 
\begin{equation}
\{(2k,j):0 \leq j \leq 2^{s-k-1}+k-1\} \subset \cup_{i=0}^{s-1} \mathcal{G}_i \label{from0tos1a}
\end{equation}
and 
\begin{equation}
\{(2k+1,j):0 \leq j \leq 2^{s-k-1}+k-1\} \subset \cup_{i=0}^{s-1} \mathcal{G}_i.\label{from0tos1b}
\end{equation}
By Definition \ref{grundysposition},
\begin{equation}
\{(2k,j):2^{s-k-1} +k\leq j \leq 2^{s-k}+k-1\} \subset  \mathcal{G}_s.\label{from0tos1c}
\end{equation}
and 
\begin{equation}
\{(2k+1,j):2^{s-k-1} +k\leq j \leq 2^{s-k}+k-1\} \subset \mathcal{G}_s.\label{from0tos1d}
\end{equation}
By (\ref{notingi}), (\ref{from0tos1a}),  (\ref{from0tos1b}),  (\ref{from0tos1c}), and (\ref{from0tos1d}),
\begin{equation}
y \geq 2^{s-k}+k. \label{ygeq2sk}
\end{equation}
At the position $(x,y)$, the total weight of stones is
$x-2y=2k-2y$ or $x-2y=2k+1-2y$, and 
and the total weight of stones that can be removed is
\begin{equation}
\lfloor \frac{2k+1-2y}{2} \rfloor = \lfloor \frac{2k-2y}{2} \rfloor = k-y.\label{totalremove2}
\end{equation}
We prove that we can move to the position $(2k,2^{s-k-1}+k)$ or $(2k+1,2^{s-k-1}+k)$.
By (\ref{ygeq2sk}),
\begin{align}
k-y -2(2^{s-k-1}+k-y) & = k-y-2^{s-k}-2k+2y  \nonumber \\
& = y-k-2^{s-k} \geq 0,
 \nonumber
\end{align}	
and hence, we can move to $(2k,2^{s-k-1}+k)$ or 
$(2k+1,2^{s-k-1}+k) \in \mathcal{G}_s$ by removing $y-(2^{s-k-1}+k)$ stones from the second pile.
\end{proof}

\begin{theorem}\label{theforgsset}
$\mathcal{G}_s$ is the set of positions whose Grundy number is $s$.
\end{theorem}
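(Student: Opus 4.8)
The plan is to derive the theorem from the three lemmas above by a routine induction on positions, after recording two preliminary facts: that the move relation is well founded, and that the sets $\mathcal{G}_s$ ($s\in\mathbb{Z}_{\ge 0}$) partition the set of all positions $\mathbb{Z}_{\ge 0}^2$. For the first, note that every legal move removes at least one stone from one of the two piles, hence strictly decreases $x+y$; since $x,y\ge 0$ this both makes induction on $x+y$ available and guarantees that the Grundy function $\mathcal{G}$ is well defined on every position.

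\textbf{Partition.} I would first check $\mathbb{Z}_{\ge 0}^2=\bigsqcup_{s\ge 0}\mathcal{G}_s$. For coverage: given $(x,y)$, if $x\ge 2y$ then $x-y+1\ge 1$, so write $x-y+1=(2s+1)2^n$ with $2s+1$ its odd part; then $0\le y\le x-y=(2s+1)2^n-1$, so $(x,y)\in\mathcal{G}_{s,n}\subseteq\mathcal{G}_s$. If $x<2y$, write $x=2k$ or $x=2k+1$; then $y\ge k+1$, so putting $e=\lfloor\log_2(y-k)\rfloor\ge 0$ and $s=k+1+e$ one verifies $k\le s-1$ and $2^{s-k-1}+k\le y\le 2^{s-k}+k-1$, whence $(x,y)\in\mathcal{G}_{s,a}$ (if $x$ even) or $\mathcal{G}_{s,b}$ (if $x$ odd). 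For disjointness: by part $(iii)$ of Lemma \ref{lemmaforthree} every element of $\mathcal{G}_{s,a}\cup\mathcal{G}_{s,b}$ satisfies $y>x/2$, i.e. $x<2y$, while every element of each $\mathcal{G}_{s,n}$ satisfies $x\ge 2y$; so these two families never meet, within the first family the pair $(s,n)$ is fixed by the odd-part factorization of $x-y+1$, and within the second family $x$ fixes $k$ (and the $a/b$ choice) and then $y$ fixes $s$. In particular each $(x,y)$ lies in exactly one $\mathcal{G}_s$.

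\textbf{The induction.} Fix $(x,y)$ and let $s$ be the unique index with $(x,y)\in\mathcal{G}_s$. Assume inductively that for every position $h$ with smaller coordinate sum, $\mathcal{G}(h)$ equals the unique index of the set $\mathcal{G}_{\bullet}$ containing $h$. Since $\mathcal{G}(x,y)=\mathrm{mex}\{\mathcal{G}(h):h\in move(x,y)\}$, it suffices to show $s\notin\{\mathcal{G}(h):h\in move(x,y)\}$ and $\{0,1,\dots,s-1\}\subseteq\{\mathcal{G}(h):h\in move(x,y)\}$. The first is immediate from Lemma \ref{frompnotp}: $move(x,y)\cap\mathcal{G}_s=\emptyset$, and by the induction hypothesis $\mathcal{G}(h)=s$ would force $h\in\mathcal{G}_s$. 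For the second, fix $t$ with $0\le t<s$; by disjointness $(x,y)\notin\mathcal{G}_t$. If $x\ge 2t$, Lemma \ref{fromnotptop1} (applied with its parameter equal to $t$) gives $move(x,y)\cap\mathcal{G}_t\ne\emptyset$; if $x\le 2t-1$, then since $(x,y)\in\mathcal{G}_s$ with $s>t$ we have $(x,y)\notin\bigcup_{i=0}^{t}\mathcal{G}_i$, so Lemma \ref{fromnotptop2} (again with parameter $t$) gives $move(x,y)\cap\mathcal{G}_t\ne\emptyset$. Picking $h$ in that intersection, $h$ has smaller coordinate sum, so the induction hypothesis gives $\mathcal{G}(h)=t$. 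Combining the two facts, $\mathrm{mex}\{\mathcal{G}(h):h\in move(x,y)\}=s$, i.e. $\mathcal{G}(x,y)=s$; this closes the induction and proves the theorem.

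\textbf{Expected obstacle.} All the genuinely game-theoretic content is already packaged in Lemmas \ref{frompnotp}--\ref{fromnotptop2}, so what remains is bookkeeping; the two places to be careful are the partition step (in particular using Lemma \ref{lemmaforthree}$(iii)$ to keep the ``$\mathcal{G}_{s,n}$'' part disjoint from the ``$\mathcal{G}_{s,a},\mathcal{G}_{s,b}$'' part, and checking the range conditions for the $a/b$ case of coverage) and the lower-set reachability step. In the latter, the key point is to split on $x\ge 2t$ versus $x\le 2t-1$ and apply Lemma \ref{fromnotptop1} and Lemma \ref{fromnotptop2} respectively \emph{with the smaller index $t$}, noting that membership in $\mathcal{G}_s$ with $s>t$ is exactly what supplies the hypothesis $(x,y)\notin\bigcup_{i\le t}\mathcal{G}_i$ demanded by Lemma \ref{fromnotptop2}.
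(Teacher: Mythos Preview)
Your proof is correct and follows the same approach as the paper, which simply cites Definition \ref{defofmexgrundy} and Lemmas \ref{frompnotp}--\ref{fromnotptop2} without further comment. You have supplied exactly the bookkeeping the paper suppresses: the partition $\mathbb{Z}_{\ge 0}^2=\bigsqcup_{s\ge 0}\mathcal{G}_s$, the well-foundedness of the move relation, and---crucially---the observation that to obtain $\{0,\dots,s-1\}$ among the Grundy values of options one must apply Lemmas \ref{fromnotptop1} and \ref{fromnotptop2} with the smaller parameter $t<s$, splitting on $x\ge 2t$ versus $x\le 2t-1$ and using disjointness of the $\mathcal{G}_i$ to feed each lemma its hypothesis.
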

\begin{proof}
This is direct from Definition \ref{defofmexgrundy}, Lemma \ref{frompnotp}, Lemma \ref{fromnotptop1}, and Lemma \ref{fromnotptop2}.
\end{proof}

\section{Josephus Problem}

\begin{definition}
We have a finite sequence $1,2,3,4, \cdots, v$ arranged in a circle, and we start with $2$ to remove every second number until there is only one number left.
This is a well-known Josephus problem, and we denote the number removed in this order by $e_1=2, e_2=4, \cdots, e_{v-1}$, and we denote the number left by $e_v$.
For any $v$, $F_s(v)=e_{v-s}$ for $s=0,1,2, \cdots, v-1$. Note that $F_0(v)=e_v$ is the last number remains in the process of elimination. 
\end{definition}

\begin{lemma}\label{fsvalues}
We have the following formulas.
\begin{equation}
F_s(s+k)  = 2k  
\end{equation}
for any $1 \leq k \leq s$, and 
\begin{equation}
F_s(2s+1)  = 1. 
\end{equation}
\end{lemma}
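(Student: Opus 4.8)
The plan is to unwind the definition $F_s(v)=e_{v-s}$ and then track the elimination order during the first trip around the circle. Putting $v=s+k$ gives $F_s(s+k)=e_k$, and putting $v=2s+1$ gives $F_s(2s+1)=e_{(2s+1)-s}=e_{s+1}$, so both assertions reduce to identifying $e_k$ and $e_{s+1}$ explicitly for these particular values of $v$.

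The first step is to record the elementary fact that in the Josephus process on $1,2,\dots,v$ one has $e_i=2i$ for every $i$ with $2i\le v$. I would prove this by induction on $i$: the base case $e_1=2$ is built into the definition, and if $e_{i-1}=2i-2$ with $2i\le v$, then at that moment the removed numbers are exactly $2,4,\dots,2i-2$, so the survivors are $1,3,5,\dots,2i-1$ together with $2i,2i+1,\dots,v$; counting onward from the just-removed number $2i-2$, the survivor $2i-1$ receives count one (and is skipped) and the next survivor, which is $2i$ (it exists precisely because $2i\le v$), receives count two and is removed. Hence $e_i=2i$.

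For the first formula, take $v=s+k$ with $1\le k\le s$. Then $2k\le s+k=v$, hence $2i\le v$ for all $i\le k$, and the fact above yields $e_k=2k$, so $F_s(s+k)=2k$. For the second formula, take $v=2s+1$; since $2s\le 2s+1$ the fact above gives $e_i=2i$ for $1\le i\le s$, so after $s$ removals the even numbers $2,4,\dots,2s$ are gone and the survivors are the $s+1$ odd numbers $1,3,\dots,2s+1$. Counting onward from the just-removed number $2s$, the survivor $2s+1$ gets count one and is skipped, and the next survivor, namely $1$, gets count two and is removed; thus $e_{s+1}=1$ and $F_s(2s+1)=1$.

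The argument is essentially bookkeeping, so I do not expect a serious obstacle; the only delicate point is the wraparound at the end of the first pass — verifying that after removing the last even number the count resumes correctly on the odd survivors — together with the boundary case $k=s$, where $v=2s$ is even and $e_s=2s=v$ is the last number removed before the sole survivor. Both are covered by the explicit description of the survivor set at each stage used in the inductive step above.
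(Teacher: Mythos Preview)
Your proof is correct and follows essentially the same approach as the paper: reduce $F_s(s+k)$ to $e_k$ and $F_s(2s+1)$ to $e_{s+1}$, then read these off from the first pass of the elimination. The paper simply asserts that the removals begin $2,4,\dots,2k,\dots$, whereas you supply an explicit inductive justification that $e_i=2i$ whenever $2i\le v$; this is extra detail rather than a different idea. One minor slip in your closing remarks: in the boundary case $k=s$ with $v=2s$, the number $e_s=2s$ is the $s$-th removal, not in general ``the last number removed before the sole survivor'' (that description is only accurate when $s=1$); this does not affect the argument, since all you need is $s\le v-1$, which holds for $s\ge 1$.
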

\begin{proof}
Since $1 \leq k \leq s$, we have numbers $1,2, \cdots, 2k, \cdots ,s+k$. We remove numbers 
\begin{equation}
2,4, \cdots 2k, \cdots . \label{howtoremove1}   
\end{equation}
We denote the number removed in this order by $e_1=2, e_2=4, \cdots, e_{s+k-1}$, and the $e_{s+k}$ is the last number that remains. $F_s(s+k)=e_{s+k-s}=e_k$ that is the $k$-th number to be removed. Therefore, by (\ref{howtoremove1}),
we have $F_s(s+k)=e_{s+k-s}=e_k=2k$.

When we have $1,2, \cdots, 2s+1$, we remove numbers $2,4, \cdots,2s$ in the first time around the circle, and the
$s$-th removed number is $e_s = 2s$. In the second time around the circle, $1$ will be removed and $e_{s+1}=1$.
$F_s(2s+1)=e_{2s+1-s}=e_{s+1}=1$.
\end{proof}

\begin{lemma}\label{joserecur}
We have the following recursions.
\begin{equation}
F_s(2v)  = 2F_s(v)-1 \label{fs2v} 
\end{equation}
and
\begin{equation}
F_s(2v+1)  = 2F_s(v)+1. \label{fs2v2}     
\end{equation}
\end{lemma}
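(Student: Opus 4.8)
The plan is to resolve one full pass of eliminations and then recognize that the process restarts, after a relabeling of the survivors, as a Josephus problem on $v$ numbers. Write $(e_1,e_2,\dots,e_N)$ for the elimination sequence on the circle $1,2,\dots,N$ (so $e_N$ is the survivor), and write $(e'_1,\dots,e'_v)$ for the corresponding sequence on $1,2,\dots,v$. As already used in the proof of Lemma~\ref{fsvalues}, the first $\lfloor N/2\rfloor$ eliminations are $e_k = 2k$. The one elementary observation I would isolate first is that the whole elimination sequence depends only on the cyclic order of the current labels together with the position of the ``pointer,'' so any cyclic-order isomorphism carrying one configuration to another carries the entire remaining elimination sequence across it; the recursion then amounts to identifying the right isomorphism.

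For $N = 2v$: after the first $v$ removals $2,4,\dots,2v$, the survivors are $1,3,\dots,2v-1$ in cyclic order with the pointer lying between $2v-1$ and $1$, which is exactly the starting configuration of the fresh $v$-circle $1,2,\dots,v$ transported by $\phi(j)=2j-1$. Hence $e_{v+k} = \phi(e'_k) = 2e'_k - 1$ for $1 \le k \le v$. Since the right-hand side $F_s(v)$ is only defined for $0 \le s \le v-1$, the index $2v-s$ satisfies $v+1 \le 2v-s \le 2v$, i.e. $2v-s = v+(v-s)$ with $1\le v-s\le v$, so $F_s(2v) = e_{2v-s} = \phi(e'_{v-s}) = 2F_s(v) - 1$, which is $(\ref{fs2v})$.

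For $N = 2v+1$: after the first $v$ removals $2,4,\dots,2v$, the pointer sits between $2v-1$ and $2v+1$, so the next move skips $2v+1$ and deletes the number $1$; thus $e_{v+1}=1$, and the survivors are now $3,5,\dots,2v+1$ with the pointer just before $3$. This is the starting configuration of the fresh $v$-circle transported by $\psi(j)=2j+1$, so $e_{v+1+k} = \psi(e'_k) = 2e'_k + 1$ for $1 \le k \le v$. Using $0 \le s \le v-1$ again, the index satisfies $v+2 \le 2v+1-s \le 2v+1$, i.e. $2v+1-s = (v+1)+(v-s)$ with $1\le v-s\le v$, so $F_s(2v+1) = e_{2v+1-s} = \psi(e'_{v-s}) = 2F_s(v) + 1$, which is $(\ref{fs2v2})$.

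The main obstacle is purely the bookkeeping: pinning down exactly where the pointer rests after the first pass — this is where the parity of $N$ enters, and where the extra deletion of the number $1$ appears in the odd case — and verifying that $\phi$ and $\psi$ genuinely reproduce the ``skip one, remove the next'' convention of the reduced game together with its initial pointer. Everything else is index arithmetic, and it is precisely the constraint $s \le v-1$, forced by $F_s(v)$ being defined, that guarantees $N-s$ lands beyond the first pass so that the reduction applies. A fully formal alternative would be induction on $v$ with the same case split, but the one-pass reduction makes both formulas transparent.
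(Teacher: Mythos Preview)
Your argument is correct and follows essentially the same approach as the paper: reduce the problem on $N$ numbers to one on $v$ numbers by resolving the first full pass of eliminations and then relabeling the survivors via $j\mapsto 2j-1$ (even case) or $j\mapsto 2j+1$ (odd case, after the extra removal of $1$). The paper in fact only writes this out for $s=0$ and then asserts that the general case is similar, whereas you carry out the index arithmetic $N-s = (\text{first-pass length}) + (v-s)$ explicitly and check that the constraint $0\le s\le v-1$ forces the target index past the first pass, so your version is the more complete of the two.
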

\begin{proof}
First, we prove these recursions for $F_0$.
We assume that 
\begin{equation}
F_0(v)=x. \label{f0v}
\end{equation}
Then, $x$ is the last number left when we start with numbers $1,2, \cdots, v$.
Suppose that we start with numbers $1,2,3, \cdots, 2v$. When all even numbers are removed for the first time around the circle, $v$ 
 numbers $1,3, \cdots, 2v-1$ remains until the last. By (\ref{f0v}), the $x$-th number among these $1,3, \cdots, 2v-1$ will be the last remaining number in this Josephus problem, and this $x$-th number is $2x-1$. Therefore we have
\begin{equation}
F_0(2v)  = 2F_0(v)-1.   
\end{equation}
Suppose that we start with numbers $1,2,3, \cdots, 2v+1$. when all even numbers are removed in the first time around the circle, and the number $1$ is removed at the beginning of the second time around the circle, $v$ numbers $3,5, \cdots, 2n+1$ remain. By (\ref{f0v}), the $x$-th number among these $3,5, \cdots, 2n+1$ will survie, and this $x$-th number is  $2x+1$. Therefore we have
\begin{equation}
F_0(2v+1)  = 2F_0(v)+1.   
\end{equation}
By a method that is similar to the one used for $F_0$, we prove
(\ref{fs2v}) and (\ref{fs2v2}).
\end{proof}

\begin{theorem}\label{theoremforjosephus}
If $v=(2s+1)2^n+m$ such that $s,n,m \in \mathbb{Z}_{\ge 0}$ and $0 \leq m \leq (2s+1)2^n-1$, then 
\begin{equation}
F_s(v)=2m+1\label{fsv2m1}
\end{equation}
\end{theorem}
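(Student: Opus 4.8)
The plan is to prove (\ref{fsv2m1}) by induction on $n$, with the Josephus recursions of Lemma \ref{joserecur} doing the real work and Lemma \ref{fsvalues} anchoring the base case. It is convenient to record first that, for $v \ge 2s+1$, the data $(n,m)$ with $v = (2s+1)2^n + m$ and $0 \le m \le (2s+1)2^n-1$ are uniquely determined, because the intervals $[(2s+1)2^n,\,(2s+1)2^{n+1}-1]$, $n \ge 0$, partition $\{2s+1, 2s+2, \dots\}$; in particular every value of $F_s$ that appears below has argument $\ge s+1$, hence is well defined.

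\emph{Base case $n=0$.} Here $v = (2s+1) + m$ with $0 \le m \le 2s$. If $m=0$, then $v = 2s+1$ and $F_s(v)=1=2m+1$ by Lemma \ref{fsvalues}. If $1 \le m \le 2s$, I would split on the parity of $m$. For $m = 2k$ with $1 \le k \le s$ we have $v = 2(s+k)+1$, so (\ref{fs2v2}) gives $F_s(v) = 2F_s(s+k)+1$, and $F_s(s+k) = 2k$ by Lemma \ref{fsvalues}, whence $F_s(v) = 4k+1 = 2m+1$. For $m = 2k-1$ with $1 \le k \le s$ we have $v = 2(s+k)$, so (\ref{fs2v}) gives $F_s(v) = 2F_s(s+k)-1 = 4k-1 = 2m+1$. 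The one idea needed is that halving $v$ lands the argument in $[s+1,2s]$, exactly the range covered by Lemma \ref{fsvalues}.

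\emph{Inductive step.} Assume (\ref{fsv2m1}) holds for $n-1$ and every admissible $m$, and let $v = (2s+1)2^n + m$ with $n \ge 1$ and $0 \le m \le (2s+1)2^n-1$. Since $(2s+1)2^n$ is even, $v \equiv m \pmod 2$. If $m = 2m'$ then $v = 2\bigl((2s+1)2^{n-1}+m'\bigr)$, and $m \le (2s+1)2^n-1$ forces $m' \le (2s+1)2^{n-1}-1$, so the inductive hypothesis applies and (\ref{fs2v}) gives $F_s(v) = 2F_s\bigl((2s+1)2^{n-1}+m'\bigr)-1 = 2(2m'+1)-1 = 2m+1$. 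If $m = 2m'+1$ then $v = 2\bigl((2s+1)2^{n-1}+m'\bigr)+1$ with again $m' \le (2s+1)2^{n-1}-1$, and (\ref{fs2v2}) gives $F_s(v) = 2F_s\bigl((2s+1)2^{n-1}+m'\bigr)+1 = 2(2m'+1)+1 = 2m+1$. This closes the induction.

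I do not expect a genuine obstacle: the argument is essentially bookkeeping of parities and ranges. The two points that need a little care are (a) verifying that the reduced index ($m'$ in the step, $k$ in the base case) stays in the range where the inductive hypothesis, respectively Lemma \ref{fsvalues}, is available, and (b) checking that every argument fed to $F_s$ is at least $s+1$ so that it is defined; both are immediate from the interval bookkeeping noted at the outset (e.g.\ $v \ge 2s+2$ whenever $m \ge 1$ in the base case, and the reduced argument is $\ge (2s+1)2^{n-1}\ge s+1$ in the step). The base case $n=0$ is the only place where one cannot simply quote an earlier lemma and must first peel off a single step of the recursion.
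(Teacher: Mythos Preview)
Your proof is correct and follows essentially the same route as the paper's: induction via the parity recursions of Lemma~\ref{joserecur}, with Lemma~\ref{fsvalues} providing the base values. Your organization is in fact slightly cleaner---you do a straightforward induction on $n$ and you treat $m=0$ explicitly via $F_s(2s+1)=1$, whereas the paper's base case tacitly lets $k=0$ in $F_s(s+k)=2k$, which Lemma~\ref{fsvalues} does not literally cover.
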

\begin{proof}
We prove this by mathematical induction.
By Lemma \ref{fsvalues} and Lemma \ref{joserecur},
for $m=2k$ with $k \in \mathbb{Z}_{\ge 0}$, 
\begin{align}
F_s((2s+1)+m) & =F_s(2s+2k+1)\nonumber \\
& =2F_s(s+k)+1 \nonumber \\
&=2(2k)+1=2m+1\nonumber 
\end{align}	
 and for $m=2k+1$ with $k \in \mathbb{Z}_{\ge 0}$,
\begin{align}
F_s((2s+1)+m) & =F_s(2s+2k+2)\nonumber \\
& =2F_s(s+k+1)-1\nonumber \\
& =2(2(k+1))-1\nonumber \\
& =2(2k+1)+1=2m+1. \nonumber 
\end{align}	
We assume that there exist $n_0$ and $m_0$ such that (\ref{fsv2m1}) is valid for any $n \leq n_0$ and $m \leq m_0$.
If $m_0 + 1 = 2m+1$ for $m \leq m_0$, by Lemma \ref{joserecur} and the assumption of mathematical induction, 
\begin{align}
F_s((2s+1)2^{n_0+1}+m_0+1) & = F_s((2s+1)2^{n_0+1}+2m+1)\nonumber \\
& = 2F_s((2s+1)2^{n_0}+m)+1\nonumber \\
& = 2(2m+1)+1 \nonumber \\
& = 2(m_0+1)+1. \nonumber 
\end{align}	
If $m_0 + 1 = 2m$ for $m \leq m_0$, by Lemma \ref{joserecur} and the assumption of mathematical induction, 
\begin{align}
F_s((2s+1)2^{n_0+1}+m_0+1) & = F_s((2s+1)2^{n_0+1}+2m)\nonumber \\
& = 2F_s((2s+1)2^{n_0}+m)-1\nonumber \\
& = 2(2m+1)-1 \nonumber \\
& = 2(2m)+1 \nonumber \\
& = 2(m_0+1)+1. \nonumber 
\end{align}	

\end{proof}

\begin{theorem}\label{theoremforrelation}
For $(x,y) \in \mathcal{G}_{s,n}$, 
\begin{equation}
F_s(x+1)=2y+1.    
\end{equation}
\end{theorem}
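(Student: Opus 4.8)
The plan is to observe that this is an immediate translation of Theorem \ref{theoremforjosephus} once the defining parametrization of $\mathcal{G}_{s,n}$ is unpacked. First I would take an arbitrary $(x,y)\in\mathcal{G}_{s,n}$ and, by Definition \ref{grundysposition}, write it in the form
\begin{equation}
x=(2s+1)\times 2^n-1+m,\qquad y=m,\nonumber
\end{equation}
where $m\in\mathbb{Z}_{\ge 0}$ satisfies $0\le m\le (2s+1)\times 2^n-1$.

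Next I would simply add $1$ to the first coordinate, obtaining
\begin{equation}
x+1=(2s+1)\times 2^n+m,\nonumber
\end{equation}
and note that the triple $(s,n,m)$ now satisfies exactly the hypotheses of Theorem \ref{theoremforjosephus}: namely $s,n,m\in\mathbb{Z}_{\ge 0}$ and $0\le m\le (2s+1)\times 2^n-1$. Applying that theorem with $v=x+1$ gives $F_s(x+1)=2m+1$, and since $y=m$ this is $2y+1$, which is the claim.

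There is essentially no obstacle here; the only point requiring a word of care is that the range condition $0\le m\le (2s+1)\times 2^n-1$ built into the definition of $\mathcal{G}_{s,n}$ coincides verbatim with the range condition needed to invoke Theorem \ref{theoremforjosephus}, so no separate case analysis or edge-case checking (e.g.\ for $m=0$ or $n=0$) is needed. I would therefore present the argument as a short corollary-style derivation rather than a multi-step proof.
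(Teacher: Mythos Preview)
Your proposal is correct and matches the paper's proof essentially line for line: unpack the parametrization $(x,y)=((2s+1)2^n-1+m,m)$ from Definition~\ref{grundysposition}, observe that $x+1=(2s+1)2^n+m$ with $0\le m\le (2s+1)2^n-1$, and apply Theorem~\ref{theoremforjosephus} to get $F_s(x+1)=2m+1=2y+1$. There is nothing to add.
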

\begin{proof}
By Definition \ref{grundysposition}, for $(x,y) \in \mathcal{G}_{s,n}$ there exist 
$m \in  \mathbb{Z}_{\ge 0}$ such that $0 \leq m \leq (2s+1)\times 2^n-1$ and 
$(x,y)=((2s+1)2^n-1+m,m)$. Then by Theorem \ref{theoremforjosephus}, 
\begin{equation}
F_s(x+1)=F_s((2s+1)2^n+m)=2m+1 = 2y+1. 
\end{equation}
\end{proof}
 By Theorem \ref{theforgsset}, Theorem \ref{theoremforjosephus}, and Theorem \ref{theoremforrelation},
 there are simple relations between the positions whose Grundy number is $s$ and the numbers that will be 
$s$-th number removed from the last.

\bibliographystyle{amsplain}


\end{document}